\theoremstyle{theorem}
\newtheorem{theorem}{Theorem}[section]
\newtheorem{lemma}[theorem]{Lemma}
\newtheorem{proposition}[theorem]{Proposition}
\numberwithin{equation}{section}
\title{Noncommutative Henselizations} 
\author{Masood Aryapoor\\
\tiny{\textit{Division of Mathematics and Physics}}\\
\tiny{\textit{M\"{a}lardalen  University}}\\
\tiny{\textit{Hamngatan 15, 632 17, Eskilstuna, 
Sweden
}}

}
 \date{}
\begin{document}
 \maketitle
\begin{abstract}
\noindent
In this paper, the familiar notion of a Henselian pair is extended to the noncommutative case. Furthermore, the problem of Henselizations is studied in the noncommutative context, and it is shown that every (not necessarily commutative) pair which is Hausdorff with respect to a certain topology has a left (and right) Henselization. \\
\textit{Keywords:} 
Noncommutative Henselian pair,
Noncommutative Henselization
\end{abstract}
\begin{section}{Introduction} 
The notion of a Henselian ring, introduced by Azumaya in \cite{azumaya1951maximally}, is well-known in the commutative case. This concept has been extended to the noncommutative case, see \cite{aryapoor2009non}. However, the theory of noncommutative Henselian rings is not as well-studied as its commutative counterpart, and many problems concerning noncommutative Henselian rings are still open. One such problem, discussed in \cite{aryapoor2009non}, is the problem of noncommutative Henselizations. In the commutative case, it is known that every local ring has a Henselization, see \cite{nagata1962local}.   
The aim of this article is to investigate the notion of Henselization in the noncommutative context. One of our results is that every noncommutative local ring satisfying a kind of ``commutativity'' condition has a Henselization, see Subsection \ref{localringHensel}.

In Section \ref{section2}, we present the preliminaries. In particular, the notion of a noncommutative Henselian ring, introduced in \cite{aryapoor2009non}, is generalized in two different ways. First, we introduce the notions of left Henselian rings and right Henselian rings, which is more natural in the noncommutative context. Second, this concept is generalized to pairs as done by Lafon in the commutative case, see \cite{lafon1963anneaux}. The final section is devoted to the notions of left Henselizations and right Henselizations.  It is shown that every perfect pair has a left (and right) Henselization which is unique up to unique isomorphism. 

\end{section} 
\begin{section}{Henselian pairs} \label{section2}
In this section, we present the preliminaries, and in particular, the notions of left Heneslian rings and right Henselian rings. In this paper, we shall assume that all rings have a unit, and all ring homomorphisms are unit-preserving. 
\begin{subsection}{The category of pairs}
The category of pairs has been introduced in the commutative setting, see \cite{lafon1963anneaux}. One can enlarge this category to include noncommutative rings. More precisely, the objects of the category $\mathcal{P}$ of pairs are pairs $(A,I)$ where $A$ is a (unitary but not necessarily commutative) ring and $I$ is an ideal of $A$. A morphism $\phi:(A,I)\to (B,J)$ of pairs is a ring homomorphism $\phi:A\to B$ satisfying  $\phi^{-1}(J)=I$. It is straightforward to verify that any directed system in $\mathcal{P}$ has a direct limit. Likewise, any inverse system in $\mathcal{P}$ has an inverse limit.

For a pair $(A,I)$, we denote the image of $a\in A$ in $A/I$ by $\bar{a}$. Given a morphism $\phi:(A,I)\to (B,J)$ of pairs, we have a canonical homomorphism $\bar{\phi}:A/I\to B/J$ induced by $\phi$. 
The proof of the following lemma is straightforward and left to the reader. 
\begin{lemma}\label{morphism}
Let  $\phi:(A,I)\to (C,K)$ and $\psi:(B,J)\to (C,K)$ be morphisms of pairs. If there is a ring homomorphism $\alpha:A\to B$ such that $\phi=\psi\circ\alpha$ as ring homomorphisms, then $\alpha:(A,I)\to (B,J)$ defines a morphism of pairs. 
\end{lemma}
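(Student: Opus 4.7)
The plan is simply to unwind the definition of a morphism of pairs and use the hypothesis $\phi=\psi\circ\alpha$ to transport the preimage condition from $\phi$ over to $\alpha$. Since $\alpha$ is already assumed to be a ring homomorphism, the only thing left to verify in order for $\alpha:(A,I)\to(B,J)$ to be a morphism of pairs is the set-theoretic equality $\alpha^{-1}(J)=I$.

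My first step is to observe that the two given morphisms of pairs supply the identities $\phi^{-1}(K)=I$ and $\psi^{-1}(K)=J$. Next, I would exploit the factorization at the level of ring maps: for any $a\in A$,
\[
a\in\alpha^{-1}(J)\iff\alpha(a)\in J=\psi^{-1}(K)\iff\psi(\alpha(a))\in K\iff\phi(a)\in K\iff a\in\phi^{-1}(K).
\]
Thus $\alpha^{-1}(J)=\phi^{-1}(K)$, and combining with $\phi^{-1}(K)=I$ yields $\alpha^{-1}(J)=I$, which is exactly what is needed.

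There is essentially no obstacle here; the argument is a direct preimage chase, and it is telling that the author relegates it to the reader. The only point worth being careful about is that $J=\psi^{-1}(K)$ is used in both directions (as an equality of subsets, not merely an inclusion), which is why the hypothesis that $\psi$ is a morphism of pairs, rather than just a ring homomorphism sending $J$ into $K$, is essential. Once that is kept in mind, the equivalences above close up cleanly and the lemma follows.
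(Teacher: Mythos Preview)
Your proof is correct: the preimage chase $\alpha^{-1}(J)=\alpha^{-1}(\psi^{-1}(K))=(\psi\circ\alpha)^{-1}(K)=\phi^{-1}(K)=I$ is exactly the intended verification. The paper itself omits the proof as straightforward, so your argument is precisely the kind of routine check the author had in mind.
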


\end{subsection}
\begin{subsection}{Coprime polynomials}
For a ring $A$, the ring of polynomials $F(x)=\sum_{i=0}^na_ix^i$ over $A$, where the indeterminate $x$ commutes with all $a\in A$, is denoted by $A[x]$. For a subset $S\subset A$, the set of all polynomials   $\sum_{i=0}^na_ix^i\in A[x]$, where $a_0,...,a_n\in S$, is denoted by $S[x]$.

Two polynomials $F_1(x),F_2(x)\in A[x]$ are called \textit{left coprime} if 
$$A[x]F_1(x)+A[x]F_2(x)=A[x].$$
The notion of right coprime polynomials is defined in the obvious way. It is easy to see that polynomials $F_1(x),F_2(x)\in A[x]$ are left coprime if and only if there are polynomials $G_1(x),G_2(x)\in A[x]$ such that $$G_1(x)F_1(x)+G_2(x)F_2(x)=1.$$
For a pair $(A,I)$, we denote the canonical homomorphism $A[x]\to (A/I)[x]$ by $F(x)=\sum_i a_i x^i\mapsto \bar{F}(x)=\sum_i \bar{a}_i x^i$. 
\end{subsection}
\begin{subsection}{Euclidean algorithm}
Let $A$ be  a ring. The following version of Euclidean algorithm holds in $A[x]$.
  
\begin{lemma}\label{EuclidAlg}
Let $I$ be a right ideal of $A$.  For every monic polynomial $F(x)\in A[x]$ and every polynomial $G(x)\in I[x]$, there exist polynomials $Q(x),R(x)\in I[x]$ such that 
$$G(x)=Q(x)F(x)+R(x) \; \text{and}\; \deg(R(x))<\deg(F(x)).$$  
\end{lemma}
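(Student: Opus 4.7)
The plan is a standard division-algorithm induction on $\deg(G(x))$, with care taken about on which side we multiply so that the right-ideal hypothesis is exactly what gets used.

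For the base case, if $\deg(G(x)) < \deg(F(x))$, I set $Q(x) = 0$ and $R(x) = G(x)$; both lie in $I[x]$ trivially and the degree condition holds.

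For the inductive step, write $G(x) = \sum_{i=0}^{m} b_i x^i$ with $b_i \in I$ and $b_m \neq 0$, and $F(x) = x^n + \sum_{j=0}^{n-1} a_j x^j$ with $a_j \in A$, assuming $m \geq n$. The idea is to kill the leading term of $G$ by subtracting $b_m x^{m-n} F(x)$. Concretely, I would consider
\[
G_1(x) := G(x) - b_m x^{m-n} F(x).
\]
Since $x$ commutes with every element of $A$, expanding gives $b_m x^{m-n} F(x) = b_m x^m + \sum_{j=0}^{n-1} b_m a_j x^{j+m-n}$. Each coefficient $b_m a_j$ lies in $I$ because $b_m \in I$ and $I$ is a \emph{right} ideal of $A$; this is exactly the place where the right-ideal hypothesis is used, and it is the only subtle point in the argument. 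Hence $b_m x^{m-n} F(x) \in I[x]$, so $G_1(x) \in I[x]$, and the leading term cancels, giving $\deg(G_1(x)) < m$.

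By the inductive hypothesis applied to $G_1(x)$, there exist $Q_1(x), R(x) \in I[x]$ with $G_1(x) = Q_1(x) F(x) + R(x)$ and $\deg(R(x)) < \deg(F(x))$. Setting $Q(x) := Q_1(x) + b_m x^{m-n} \in I[x]$ then yields $G(x) = Q(x) F(x) + R(x)$ with all the required properties.

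There is essentially no obstacle here; the only thing to get right is the side on which the leading coefficient $b_m$ is multiplied against the lower coefficients of the monic $F(x)$. Because the quotient $Q(x)$ is written on the left of $F(x)$, the coefficients of $Q(x) F(x)$ are sums of products of the form (coefficient of $Q$) $\cdot$ (coefficient of $F$), so staying inside $I[x]$ requires $I$ to be closed under right multiplication by elements of $A$, i.e.\ to be a right ideal — exactly the hypothesis given.
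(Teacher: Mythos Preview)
Your proof is correct and essentially identical to the paper's: both perform induction on $\deg(G(x))$, use the trivial base case, and in the inductive step form $G_1(x)=G(x)-b_m x^{m-n}F(x)$, noting that $b_m\in I$ together with the right-ideal hypothesis keeps $G_1(x)\in I[x]$, then set $Q(x)=b_m x^{m-n}+Q_1(x)$. Your version is slightly more explicit about why the right-ideal condition is exactly what is needed.
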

\begin{proof}
We use induction on $\deg(G(x))$. If $\deg(G(x))<\deg(F(x))$, we set $Q(x)=0$ and $R(x)=G(x)$, so we are done. Let $G(x)=\sum_{i=0}^mb_ix^i$ and $$F(x)=a_0+a_1x+\cdots+a_{n-1}x^{n-1}+x^n,$$
where  $m\geq n$. The polynomial $G_1(x)=G(x)-b_m x^{m-n}F(x)$ belongs to $I$ because $b_m\in I$ and $I$ is a right ideal. Since 
$\deg(G_1(x))<\deg(G(x)),$ by induction, there are 
polynomials $Q_1(x),R(x)\in I[x]$ such that 
$$G_1(x)=Q_1(x)F(x)+R(x) \; \text{and}\; \deg(R(x))<\deg(F(x)).$$  
Setting $Q(x)=b_mx^{m-n}+Q_1(x)$, we obtain 
$$G(x)=Q(x)F(x)+R(x),$$
where $Q(x),R(x)$ satisfy the desired conditions, and we are done. 
\end{proof}

\end{subsection}
\begin{subsection}{Jacobson pairs}

A pair $(A,I)$ is called a \textit{Jacobson pair} if $I\subset rad(A)$ where $rad(A)$ is the Jacobson radical of $A$. We have the following result concerning Jacobson pairs. 
\begin{lemma}\label{coprime-reduction}
Let $(A,I)$ be a Jacobson pair. Then, monic polynomials $F_1(x),F_2(x)\in A[x]$ are left (resp. right) coprime if and only if the polynomials $\bar{F}_1(x),\bar{F}_2(x)\in (A/I)[x]$ are left (resp. right) coprime. 
\end{lemma}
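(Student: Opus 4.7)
My plan is to prove the nontrivial direction by a Nakayama argument, using the monic Euclidean algorithm of Lemma \ref{EuclidAlg} to supply the required finite-generation hypothesis. The forward implication is routine: an identity $G_1(x)F_1(x)+G_2(x)F_2(x)=1$ in $A[x]$ descends directly to the same identity in $(A/I)[x]$.

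For the converse, I would introduce
$$M\;:=\;A[x]\bigm/\bigl(A[x]F_1(x)+A[x]F_2(x)\bigr),$$
viewed as a cyclic left $A[x]$-module, and show $M=0$. The first key step is to observe that $M$ is in fact finitely generated as a left $A$-module. This is exactly what the monicity of $F_1$ buys us via Lemma \ref{EuclidAlg}: specialising its right ideal to $A$ itself, every polynomial in $A[x]$ is congruent modulo $A[x]F_1$ to one of degree strictly less than $\deg F_1$, so $A[x]/A[x]F_1$ is generated as a left $A$-module by $1,x,\dots,x^{\deg F_1-1}$, and $M$ is a quotient of this.

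The second step is to compute $M/IM$. Since $x$ commutes with every element of $A$ and $I$ is two-sided, one checks that $IA[x]=I[x]$, and the canonical surjection $A[x]\to (A/I)[x]$ induces an isomorphism
$$M/IM\;\cong\;(A/I)[x]\Bigm/\bigl((A/I)[x]\bar F_1(x)+(A/I)[x]\bar F_2(x)\bigr).$$
Left coprimality of $\bar F_1,\bar F_2$ in $(A/I)[x]$ says precisely that the right hand side is zero, hence $IM=M$. Since $I\subseteq \mathrm{rad}(A)$ and $M$ is finitely generated over $A$, the noncommutative Nakayama lemma (proved by choosing a minimal generating set and using invertibility of $1-j$ for $j\in\mathrm{rad}(A)$) yields $M=0$, which is exactly $A[x]F_1+A[x]F_2=A[x]$. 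The right coprime statement follows by the mirror argument, dividing on the right by a monic polynomial.

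The only genuine obstacle is upgrading the automatic $A[x]$-cyclicity of $M$ to finite generation over $A$, so that Nakayama applies on the coefficient ring rather than on $A[x]$; this is precisely what monicity and the Euclidean algorithm of Lemma \ref{EuclidAlg} provide, after which the remainder is a formal computation.
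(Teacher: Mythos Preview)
Your proof is correct and follows essentially the same approach as the paper: both arguments reduce to Nakayama's lemma applied to a finitely generated $A$-module arising from $A[x]/A[x]F_1(x)$, the finite generation coming from monicity via the Euclidean algorithm. The only cosmetic difference is that the paper works with $M=A[x]/A[x]F_1(x)$ and the submodule $N$ generated by $F_2(x)$, deducing $M=N$ from $M=N+IM$, whereas you pass directly to the quotient $M/N$ and show it vanishes.
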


\begin{proof}
The ``only if'' part is trivial. To prove the other direction, suppose that $\bar{F}_1(x),\bar{F}_2(x)\in (A/I)[x]$ are left coprime. It follows that $$A[x]F_1(x)+A[x]F_2(x)+I[x]=A[x]$$
Considering $$M=\frac{A[x]}{A[x]F_1(x)}$$ as a left $A$-module in the obvious way, we see that 
$M=N+IM$ where $N$ is the submodule of $M$ generated by $F_2(x)$.  
The module $M$ is a finitely generated $A$-module because $F_1(x)$ is monic. Since $I\subset rad(A)$, Nakayama's lemma
implies that $N=M$, that is,   
$$A[x]F_1(x)+A[x]F_2(x)=A[x],$$
and we are done. 
\end{proof}

\end{subsection}
\begin{subsection}{Local homomorphisms}
A ring homomorphism $\phi:A\to B$ is called \textit{local} if $\phi$ sends every nonunit in $A$ to a nonunit in $B$. Here, we provide some facts concerning local maps. The proof of the first lemma is easy and left to the reader. 
\begin{lemma}\label{localmaps}
Let $\phi:A\to B$ and $\psi:B\to C$ be ring homomorphisms. If $\psi\circ \phi$ is local, then $\phi$ is local too.
\end{lemma}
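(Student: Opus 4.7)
The plan is to prove the contrapositive: I will show that if $\phi$ is not local, then $\psi\circ\phi$ is not local either. So, suppose some nonunit $a\in A$ has $\phi(a)$ a unit in $B$; my goal is to produce a nonunit in $A$ whose image under $\psi\circ\phi$ is a unit in $C$.

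The first step is to use invertibility of $\phi(a)$ directly. Since $\phi(a)$ is a unit in $B$, there exists $b\in B$ with $\phi(a)b = b\phi(a) = 1$. Applying the ring homomorphism $\psi$ to both of these identities yields
\[
\psi(\phi(a))\,\psi(b) = \psi(b)\,\psi(\phi(a)) = \psi(1) = 1,
\]
which shows that $(\psi\circ\phi)(a)$ is a unit in $C$. Combined with $a$ being a nonunit in $A$, this contradicts the hypothesis that $\psi\circ\phi$ is local.

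There is essentially no obstacle here: the argument only uses that ring homomorphisms preserve products and the identity, and that inverses are preserved under any such homomorphism. The statement is really just the observation that \emph{being a unit} is preserved by any ring homomorphism, so a composite map reflects nonunits only if the first factor already does. No noncommutative subtlety arises, since one checks invertibility on both sides separately and each equation is transported individually by $\psi$.
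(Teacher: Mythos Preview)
Your argument is correct: ring homomorphisms preserve units, so if $\phi(a)$ is a unit then $(\psi\circ\phi)(a)$ is a unit, and the contrapositive gives the result. The paper itself leaves this lemma to the reader, so there is no proof to compare against; your approach is exactly the standard one.
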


\begin{lemma}\label{localmapsLocalrings}
Let $\phi:A\to B$ be a local homomorphism. If $B$ is a local ring, then $A$ is a local ring whose maximal ideal is $\phi^{-1}(I)$ where $I$ is the maximal ideal of $B$. 
\end{lemma}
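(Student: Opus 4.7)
The plan is to identify $\phi^{-1}(I)$ with the set of non-units of $A$, and then invoke the standard (noncommutative) characterization that a ring is local precisely when its non-units form a two-sided ideal.

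First I would note that $\phi^{-1}(I)$ is automatically a two-sided ideal of $A$ since $I$ is a two-sided ideal of $B$ and $\phi$ is a ring homomorphism. So the only real content is to show that an element $a \in A$ is a non-unit if and only if $a \in \phi^{-1}(I)$.

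For the ``non-unit implies in $\phi^{-1}(I)$'' direction I would use the hypothesis that $\phi$ is local: if $a$ is a non-unit in $A$, then $\phi(a)$ is a non-unit in $B$. Since $B$ is local with maximal ideal $I$, its non-units are precisely the elements of $I$, hence $\phi(a) \in I$ and $a \in \phi^{-1}(I)$. For the converse direction, if $a \in \phi^{-1}(I)$ were a unit in $A$ with inverse $a^{-1}$, then applying $\phi$ to $a a^{-1} = a^{-1} a = 1$ would give $\phi(a) \phi(a^{-1}) = \phi(a^{-1}) \phi(a) = 1$, making $\phi(a)$ a unit in $B$, which contradicts $\phi(a) \in I \subset \mathrm{rad}(B)$.

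Having established that the set of non-units of $A$ coincides with the two-sided ideal $\phi^{-1}(I)$, I would conclude by quoting the characterization that a ring whose set of non-units is closed under addition (equivalently, forms a two-sided ideal) is local, with that ideal being its unique maximal left, right, and two-sided ideal. There is no serious obstacle here; the only subtlety is making sure ``local ring'' is being used in the noncommutative sense of having a unique maximal one-sided ideal (equivalently, non-units forming an ideal), rather than merely having a unique maximal two-sided ideal, so that the argument above indeed establishes locality.
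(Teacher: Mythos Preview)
Your proof is correct and essentially the same as the paper's: both hinge on identifying the nonunits of $A$ with $\phi^{-1}(I)$ via the locality of $\phi$ and the fact that the nonunits of $B$ are exactly $I$. The only cosmetic difference is that the paper invokes the equivalent characterization ``$a+b$ invertible $\Rightarrow$ $a$ or $b$ invertible'' (citing Lam), whereas you work directly with the paper's own definition that the nonunits form an ideal---arguably the more streamlined choice here.
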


\begin{proof}
To show that $A$ is a local ring, we need to prove that if $a+b$ is invertible in $A$ for some $a,b\in A$, then either $a$ or $b$ is invertible in $A$, see Theorem 19.1 in \cite{lam2013first}. If $a+b$ is invertible in $A$ for some $a,b\in A$, then $\phi(a)+\phi(b)$ is invertible in $B$ which implies that $\phi(a)$ or $\phi(b)$ is invertible in $B$, because $B$ is local. It follows that either $a$ or $b$ is invertible in $A$ because $\phi$ is local. Therefore, $A$ is a local ring. Since $\phi$ is a local homomorphism and $B$ is a local ring, we see that $\phi^{-1}(I)$ consists of all nonunits in $A$, that is, $\phi^{-1}(I)$ is the maximal ideal of $A$. 
\end{proof}

\end{subsection}
\begin{subsection}{Localizations}\label{localizationsection}
Let $\phi:A\to B$ be a ring homomorphism. In what follows, we introduce a ring $A_\phi$, a ring homomorphism 
$\Lambda_\phi: A\to A_\phi$ and a local homomorphism $\Psi_\phi:  A_\phi\to B$ such that $\phi=\Psi_\phi \circ \Lambda_\phi$. Let $S_1$ be the set of all elements $s\in A$ such that $\psi_1(s)$ is invertible in $B$. Let $A_1=A_{S_1}$ be the localization of $A$ at $S_1$ and $\lambda_1:A\to A_{1}$ be the canonical homomorphism, consult \cite{cohn_1995}
for the definition and elementary properties of localizations. It follows from the universal property of $A_{S_1}$ that there exists a unique homomorphism $\psi_1:A_{1}\to B$ such that $\phi=\psi_1  \circ \lambda_1$. Let $S_2$ be the set of all elements $s\in A_1$ such that $\phi_1(s)$ is invertible in $B$. Consider the localization $A_2=(A_{1})_{S_2}$ of $A_1$ at $S_2$ and let $\lambda_2:A_1\to A_{2}$ be the canonical homomorphism. It follows from the universal property of $A_{2}$ that there exists a homomorphism $\psi_2:A_{2}\to B$ such that $\psi_1=\psi_2 \circ \lambda_2$. Continuing this process, we obtain a sequence of rings $A_1,A_2,A_3,...$, homomorphisms 
$\lambda_{i}:A_{i-1}\to A_{i}$ and $\psi_i:A_i\to B$ such that $\psi_{i-1}=\psi_{i}  \circ \lambda_i$ for $i=1,2,...,$ where $\psi_0=\phi$. We set $A_\phi=\varinjlim A_i$. We have a canonical homomorphism $\Lambda_\phi:A\to A_\phi$ and a canonical homomorphism 
$\Psi_\phi:  A_\phi\to B$. In the following proposition, we provide some facts about this construction, see Section 2 in \cite{aryapoor2010f} for a proof of this proposition and  a detailed discussion of the localization ring $A_\phi$.  
\begin{proposition}\label{localization}
The ring homomorphism $\Psi_\phi:  A_\phi\to B$ is local and satisfies the equality $\phi=\Psi_\phi  \circ\Lambda_\phi$.
\end{proposition}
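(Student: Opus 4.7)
The plan is to exploit the universal properties that define the tower $A_1 \to A_2 \to \cdots$ and the colimit $A_\phi = \varinjlim A_i$. Write $\mu_i \colon A_i \to A_\phi$ for the canonical homomorphisms into the direct limit, so that $\mu_i \circ \lambda_i = \mu_{i-1}$ and, by construction, $\Lambda_\phi = \mu_1 \circ \lambda_1$. The two assertions of the proposition are then handled separately.

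For the equality $\phi = \Psi_\phi \circ \Lambda_\phi$, I would invoke the universal property of the direct limit. The family $\{\psi_i \colon A_i \to B\}$ satisfies $\psi_{i-1} = \psi_i \circ \lambda_i$ by construction, hence forms a compatible cocone on $(A_i, \lambda_i)$, and therefore induces a unique $\Psi_\phi \colon A_\phi \to B$ with $\psi_i = \Psi_\phi \circ \mu_i$ for every $i$. Taking $i = 1$ gives $\phi = \psi_0 = \psi_1 \circ \lambda_1 = \Psi_\phi \circ \mu_1 \circ \lambda_1 = \Psi_\phi \circ \Lambda_\phi$. This part is essentially formal.

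The real content is locality. Let $a \in A_\phi$ be such that $\Psi_\phi(a)$ is a unit in $B$. Since $A_\phi = \varinjlim A_i$, there exist an index $n$ and an element $a_n \in A_n$ with $a = \mu_n(a_n)$. Then $\psi_n(a_n) = \Psi_\phi(\mu_n(a_n)) = \Psi_\phi(a)$ is invertible in $B$, so by the very definition of $S_{n+1}$, we have $a_n \in S_{n+1}$. By the universal property of the localization $A_{n+1} = (A_n)_{S_{n+1}}$, the element $\lambda_{n+1}(a_n)$ is invertible in $A_{n+1}$. Pushing forward by $\mu_{n+1}$ and using $\mu_{n+1} \circ \lambda_{n+1} = \mu_n$, we obtain $a = \mu_n(a_n) = \mu_{n+1}(\lambda_{n+1}(a_n)) \in A_\phi^\times$. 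Consequently every $a \in A_\phi$ whose image in $B$ is a unit is already a unit in $A_\phi$; equivalently, $\Psi_\phi$ sends nonunits to nonunits, which is locality.

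The principal obstacle — really the only subtlety — is making the Cohn-style noncommutative universal localization behave well at each stage: one must know that $A_{S_i}$ exists, that $\lambda_i$ inverts precisely the elements of $S_i$, that the induced maps $\psi_i$ and the compatibility $\psi_{i-1} = \psi_i \circ \lambda_i$ arise from the universal property, and that elements of the colimit really do come from some finite stage. Once these standard facts about universal localization (for which the author refers to \cite{cohn_1995} and \cite{aryapoor2010f}) are granted, the argument above is purely diagrammatic.
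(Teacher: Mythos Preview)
Your argument is correct. The paper itself does not supply a proof of this proposition; it simply refers the reader to Section~2 of \cite{aryapoor2010f}. What you have written is precisely the natural argument one would expect to find there: the factorization $\phi=\Psi_\phi\circ\Lambda_\phi$ is immediate from the compatibility $\psi_{i-1}=\psi_i\circ\lambda_i$ and the universal property of the colimit, and locality follows because any preimage $a_n\in A_n$ of an element $a\in A_\phi$ with $\Psi_\phi(a)\in B^\times$ lies in $S_{n+1}$ by definition, hence becomes invertible one step up. Your closing remarks correctly isolate the only places where something nontrivial is being assumed, namely the existence and basic properties of Cohn's universal localization at a set of elements (viewed as $1\times1$ matrices) and the fact that every element of a filtered colimit of rings is represented at some finite stage; both are standard and are exactly what the paper's references to \cite{cohn_1995} and \cite{aryapoor2010f} are meant to cover.
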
 

\end{subsection}
\begin{subsection}{Commutativity with respect to a filtration}
Let $(A,I)$ be a pair. A descending sequence $\mathcal{F}$ of ideals
$$I_1\supset I_2 \supset ...$$
of $A$ is called a \textit{filtration} on $(A,I)$ if $I_1=I$. 
The pair $(A,I)$ is called \textit{commutative with respect to} $\mathcal{F}$, or $\mathcal{F}$-\textit{commutative} for short, if 
$[A,I_n]\subset I_{n+1}$ for all $n=1,2,...$. The notation $[S,T]$, where $S,T\subset A$, stands for the set of all elements of the form $st-ts$ where $s\in S, t\in T$.   
\end{subsection}
\begin{subsection}{The commutator filtration}
For every pair $(A,I)$, one can define a sequence $I^{(1)}, I^{(2)},...$ of ideals of $A$ as follows: $I^{(1)}=I$; for $n=1,2,...$, the ideal $I^{(n+1)}$ is the ideal generated by $[A,I^{(n)}]$. It is easy to see that the sequence $I^{(1)}, I^{(2)},...$ is a filtration on $(A,I)$ called the \textit{commutator filtration} of $(A,I)$. Clearly, $(A,I)$ is commutative with respect to its commutator filtration. Note that if $\phi:(A,I)\to (B,J)$ is a morphism of pairs then $\phi(I^{(n)})\subset  J^{(n)}$ for all $n=1,2,...$. 
\end{subsection}
\begin{subsection}{Topologies defined by filtrations}

Let $(A,I)$ be a pair and $\mathcal{F}: I_1=I, I_2,...$ be a filtration on $(A,I)$. 
The $\mathcal{F}$-\textit{topology} on $(A,I)$ is the linear topology on $A$ for which the sets $I,I_2,\dots$ form a fundamental system of neighborhoods of $0$. The pair $(A,I)$ is called \textit{separated with respect to} $\mathcal{F}$, or  $\mathcal{F}$\textit{-separated}
for short, if $A$ is Hausdorff with respect to the $\mathcal{F}$-topology. Note that $(A,I)$  is $\mathcal{F}$-separated if and only if $\cap_{n=1}^\infty I_n =\{0\}$. 
The pair $(A,I)$ is called \textit{complete with respect to} $\mathcal{F}$, or $\mathcal{F}$\textit{-complete} for short, if it is complete with respect to the $\mathcal{F}$-topology. 
  
\end{subsection}
\begin{subsection}{Unique factorization pairs}

A pair $(A,I)$ is called a \textit{left unique factorization pair}, or  \textit{LUFP} for short, if for every 
factorization $\bar{F}(x)=f_1(x)f_2(x)$ of a monic polynomial $F(x)\in A[x]$  over $A/I$, where 
$f_1(x)f_2(x)\in (A/I)[x]$ are  left coprime monic polynomials, there exists at most one factorization $F(x)=F_1(x)F_2(x)$ such that $F_1(x), F_2(x)\in A[x]$ are monic polynomials, and 
$\bar{F}_1(x)=f_1(x), \bar{F}_2(x)=f_2(x)$. The notion of a right unique factorization pair (RUFP) is defined in a similar way. 
A pair is called a \textit{unique factorization pair}, or  \textit{UFP} for short, if it is both an LUFP and an RUFP. 

\end{subsection}
\begin{subsection}{Henselian pairs} 
A pair $(A,I)$ is called \textit{left Henselian} if $(A,I)$ is a Jacobson pair, and the following version of Hensel's lemma holds in A. For every monic polynomial $F(x)\in A[x]$, if $\bar{F}(x)=f_1(x)f_2(x)$, where $f_1(x),f_2(x)\in (A/I)[x]$ are left coprime monic polynomials, then there exist  unique monic polynomials
$F_1(x),F_2(x)\in A[x]$ satisfying $F(x)=F_1(x)F_2(x)$, $\bar{F}_1(x)=f_1(x)$ and $\bar{F}(x)=f_2(x)$. We note that  the polynomials $F_1(x)$ and $F_2(x)$ are left coprime, see Lemma \ref{coprime-reduction}. The notion of a right Henselian pair is defined in a similar fashion. A pair which is both left and right Henselian is called \textit{Henselian}. Obviously, every (resp. left or right) Henselian ring is a (resp. left or right) UFP.  We note that if $A/I$ is a commutative ring, then $(A,I)$ is left Henselian if and only if it is right Henselian. 
\end{subsection}
\begin{subsection}{A class of left Henselian pairs}
The following result generalizes Theorem 2.1 in \cite{aryapoor2009non}.
\begin{theorem}\label{HenselianRings}
Let $(A,I)$ be a pair and $\mathcal{F}: I_1=I,I_2,...$ be  a filtration on $(A,I)$  such that  $(A,I)$ is 
$\mathcal{F}$-commutative, $\mathcal{F}$-separated and $\mathcal{F}$-complete. If $I_n[A,A]\subset I_{n+1}$ and $I_n^2\subset I_{n+1}$ for all $n=1,2,...$, then $(A,I)$ is left Henselian.
\end{theorem}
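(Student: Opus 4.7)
The plan is to verify both requirements in the definition of left Henselian: that $(A,I)$ is a Jacobson pair, and that the Hensel lifting property holds for left-coprime monic factorizations. For the Jacobson property, I show that $1+a$ is invertible for every $a\in I$. Iterating the hypothesis $I_n^2\subset I_{n+1}$ gives $a^{2^{k-1}}\in I_k$, so $a^n\to 0$ in the $\mathcal{F}$-topology and the partial sums $s_N=\sum_{n=0}^{N}(-a)^n$ form a Cauchy sequence; $\mathcal{F}$-completeness supplies a limit $s\in A$, and the telescoping identity $(1+a)s_N=1+(-1)^Na^{N+1}$ combined with $\mathcal{F}$-separatedness and continuity of multiplication forces $(1+a)s=s(1+a)=1$. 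Hence $I\subset\mathrm{rad}(A)$.

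For the lifting property I proceed by successive approximation. Fix arbitrary monic lifts $F_1^{(1)},F_2^{(1)}\in A[x]$ of $f_1,f_2$, of degrees $d_1,d_2$. Inductively, suppose $F_1^{(n)},F_2^{(n)}$ are monic lifts of degrees $d_1,d_2$ with $E_n:=F-F_1^{(n)}F_2^{(n)}\in I_{n+1}[x]$. I seek corrections $H_i\in I_{n+1}[x]$ with $\deg H_i<d_i$ so that $F_i^{(n+1)}:=F_i^{(n)}+H_i$ satisfies $E_{n+1}\in I_{n+2}[x]$. Expanding the product and noting $H_1H_2\in I_{n+1}^2[x]\subset I_{n+2}[x]$ reduces the task to solving
\[
F_1^{(n)}H_2+H_1F_2^{(n)}\equiv E_n\pmod{I_{n+2}[x]}.
\]

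The central algebraic step is to solve this congruence. Pass to $B=A/I_{n+2}$ with ideal $J=I_{n+1}/I_{n+2}$; the three filtration hypotheses translate respectively to $J$ being central in $B$, to $J[B,B]=0$, and to $J^2=0$. A consequence is the identity $uP_1P_2=uP_2P_1$ for all $u\in J$ and $P_1,P_2\in B[x]$. Using the Jacobson property established above and Lemma~\ref{coprime-reduction}, pick $G_1,G_2\in A[x]$ with $G_1F_1^{(n)}+G_2F_2^{(n)}=1$, so $E_n=E_nG_1F_1^{(n)}+E_nG_2F_2^{(n)}$. Applying Lemma~\ref{EuclidAlg} to $E_nG_1\in I_{n+1}[x]$ against the monic $F_2^{(n)}$ produces $E_nG_1=QF_2^{(n)}+H_2$ with $Q,H_2\in I_{n+1}[x]$ and $\deg H_2<d_2$. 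Substituting and using $QF_2^{(n)}F_1^{(n)}\equiv QF_1^{(n)}F_2^{(n)}\pmod{I_{n+2}[x]}$ gives $E_n\equiv H_2F_1^{(n)}+H_1F_2^{(n)}\pmod{I_{n+2}[x]}$ with $H_1:=QF_1^{(n)}+E_nG_2$, and centrality of $J$ converts $H_2F_1^{(n)}$ into $F_1^{(n)}H_2$ modulo $I_{n+2}[x]$. A degree count against the monic $F_2^{(n)}$ in the resulting identity in $B[x]$ forces $\deg H_1\le d_1-1$ in $B[x]$; truncating away the higher coefficients of $H_1$ (which lie in $I_{n+2}$) yields the desired representative in $A[x]$. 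Iterating the induction, the coefficient sequences of $F_1^{(n)},F_2^{(n)}$ are Cauchy of bounded degree, and $\mathcal{F}$-completeness delivers monic limits $F_1,F_2\in A[x]$ with $F=F_1F_2$ and $\bar F_i=f_i$.

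For uniqueness, suppose $F=F_1F_2=F_1'F_2'$ with both pairs lifting $f_1,f_2$, and set $D=F_1-F_1'$, $D'=F_2-F_2'$; expansion gives $F_1D'+DF_2=DD'$. I show by induction that $D,D'\in I_n[x]$ for all $n$. Given $D,D'\in I_n[x]$, we have $DD'\in I_{n+1}[x]$, so the equation reduces to $F_1D'+DF_2\equiv 0\pmod{I_{n+1}[x]}$; centrality of $I_n/I_{n+1}$ in $A/I_{n+1}$ turns this into $D'F_1\equiv -DF_2$, and left-coprimality of $f_1,f_2$ via Bezout expresses $D'$ modulo $I_{n+1}[x]$ as a right multiple of $F_2$. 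The bound $\deg D'<d_2$ and monicity of $F_2$ force $D'\in I_{n+1}[x]$, after which a coefficient-by-coefficient descent using monicity of $F_2$ in $DF_2\in I_{n+1}[x]$ gives $D\in I_{n+1}[x]$. $\mathcal{F}$-separatedness yields $D=D'=0$. The main obstacle is the third paragraph: converting the left-Bezout form into the symmetric two-sided form $F_1H_2+H_1F_2$ while preserving both degree bounds is exactly the step where all three filtration hypotheses must cooperate, and careful bookkeeping of which rearrangements are valid only modulo $I_{n+2}[x]$ is essential.
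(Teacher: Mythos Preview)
Your argument is correct and follows the same overall strategy as the paper: the Jacobson property via the geometric series, existence of the lifting by successive approximation using the B\'ezout relation (Lemma~\ref{coprime-reduction}), division with remainder in $I_n[x]$ (Lemma~\ref{EuclidAlg}), and the hypotheses $[A,I_n]\subset I_{n+1}$, $I_n[A,A]\subset I_{n+1}$, $I_n^2\subset I_{n+1}$ to absorb the commutator and square terms. There is a harmless off-by-one slip in your induction (with $F_1^{(1)},F_2^{(1)}$ arbitrary lifts one has $E_1\in I_1[x]$, not $I_2[x]$; the hypothesis should read $E_n\in I_n[x]$), but the mechanism is unaffected.

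The only genuine difference is in the uniqueness step. The paper observes that $\mathcal{F}$-commutativity gives $I^{(n)}\subset I_n$, so $(A,I)$ is also separated for its commutator filtration, and then invokes Proposition~\ref{uniquenesslifting} (perfect Jacobson pairs are UFP). You instead give a direct, self-contained induction on $n$ showing $D,D'\in I_n[x]$ for all $n$ using the $\mathcal{F}$-filtration itself. Your argument is essentially the proof of Proposition~\ref{uniquenesslifting} specialized to $\mathcal{F}$, with the extra coefficient-descent step to extract $D\in I_{n+1}[x]$ from $DF_2\in I_{n+1}[x]$; it has the advantage of not appealing to a later result, while the paper's route isolates the uniqueness phenomenon as a general fact about perfect Jacobson pairs that is reused elsewhere.
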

\begin{proof}
First, we show that $(A,I)$ is a Jacobson pair. Let $a\in I$ be given. The condition $I_n^2\subset I_{n+1}$ for all $n=1,2,...$, implies that
the geometric series $$1-a+a^2-\cdots$$ converges to a unique limit in $A$ because $(A,I)$ is 
 $\mathcal{F}$-separated and $\mathcal{F}$-complete. The limit of this series is the inverse of $1+a$. Therefore, every element in $1+I$ is invertible, which implies that $I\subset rad(A)$, that is, $(A,I)$ is a Jacobson pair. 

To show that $(A,I)$ is left Henselian,
let $F(x)\in A[x]$ be a monic polynomial such that $\bar{F}(x)=f_1(x)f_2(x)$ where  $f_1(x),f_2(x)\in (A/I)[x]$
are left coprime monic polynomials. We need to show that we can lift this factorization to $A[x]$ in a unique way.     
First, we show that there are sequences of monic polynomials $$ F_{1,1}(x), F_{1,2}(x),..., F_{1,i}(x),... $$ and 
$$ F_{2,1}(x), F_{2,2}(x),..., F_{2,i}(x),... $$ in $A[x]$ such that 
$$\bar{ F}_{1,i}(x)=f_1(x),\;  \bar{ F}_{2,i}(x)=f_2(x),$$
 $$F_{1,i+1}(x)-F_{1,i}(x) \in I_{i}[x],\; F_{2,i+1}(x)-F_{2,i}(x) \in I_{i}[x],$$
$$F(x)-F_{1,i}(x)F_{2,i}(x)\in I_{i}[x],$$ for all $i\geq 1$. To construct these sequences, we use induction on $i$. 
Since the canonical map $A\to A/I$ is onto, we can find monic polynomials $$F_{1,1}(x), F_{2,1}(x)\in A[x]$$ such that $\bar{F}_{1,1}(x)=f_1(x), \bar{ F}_{2,1}(x)=f_2(x)$. Clearly,  $F_{1,1}(x)$ and  $F_{2,1}(x)$ satisfy the desired conditions. 
Having found $F_{1,i}(x)$ and $F_{2,i}(x)$, we find $F_{1,i+1}(x)$ and $F_{2,i+1}(x)$ as follows. Set 
$$G(x)=F(x)-F_{1,i}(x)F_{2,i}(x).$$
I claim that there are polynomials $R_1(x), R_2(x)\in I_i[x]$ such that  
$$\deg(R_1(x))<\deg(F_{2,i}(x)), \deg(R_2(x))<\deg(F_{1,i}(x)),$$ and 
$$R_2(x) F_{1,i}(x)+R_1(x) F_{2,i}(x)-G(x)\in I_{i+1}[x]$$
By Lemma  \ref{coprime-reduction},  there exit polynomials $H_1(x), H_2(x)\in A[x]$ such
$$H_1(x)F_{1,i}(x)+H_2(x)F_{2,i}(x)=1.$$
It follows that 
$$G_1(x)F_{1,i}(x)+G_2(x)F_{2,i}(x)=G(x),$$
where both $G_1(x)=G(x)H_1(x)$ and $G_2(x)=G(x)H_2(x)$ belong to $I_i[x]$. 
By Lemma \ref{EuclidAlg}, there are polynomials $Q(x),R_1(x)\in I_i[x]$ such that 
$$G_1(x)=Q(x)F_{2,i}(x)+R_1(x) \; \text{and}\; \deg(R_1(x))<\deg(F_{2,i}(x)).$$  
It follows that 
$$G(x)=R_1(x)F_{1,i}(x)+(G_2(x)+Q(x)F_{1,i}(x))F_{2,i}(x)+$$
$$Q(x)(F_{2,i}(x)F_{1,i}(x)-F_{1,i}(x)F_{2,i}(x))$$
Using the condition $I_i[A,A]\subset I_{i+1}$, we see that
$$ R_1(x)F_{1,i}(x)+(G_2(x)+Q(x)F_{1,i}(x))F_{2,i}(x)-G(x)\in I_{i+1}[x].$$
Let 
$G_2(x)+Q(x)F_{1,i}(x)=\sum_{i=0}^mb_ix^i.$
Assume that $m\geq \deg(F_{1,i}(x))$.
Since $F_{2,i}(x)$ is monic, and
$$\deg(R_1(x)F_{1,i}(x))< \deg(F_{1,i}(x))+\deg(F_{2,i}(x)),$$
$$\deg(G(x))<\deg(F_{1,i}(x))+\deg(F_{2,i}(x)),$$
the relation 
$$ R_1(x)F_{1,i}(x)+(G_2(x)+Q(x)F_{1,i}(x))F_{2,i}(x)-G(x)\in I_{i+1}[x]$$
implies that $b_m\in I_{i+1}$. Therefore, we have
$$ R_1(x)F_{1,i}(x)+(G_2(x)-b_mx^m)F_{2,i}(x)-G(x)\in I_{i+1}[x].$$
Using induction, we conclude that 
$$ R_1(x)F_{1,i}(x)+R_2(x)F_{2,i}(x)-G(x)\in I_{i+1}[x],$$
where $R_2(x)=\sum_{i<\deg(F_1(x))}b_ix^i$, proving the claim. We set
$$F_{1,i+1}(x)=F_{1,i}(x)+R_2(x),$$
$$F_{2,i+1}(x)=F_{2,i}(x)+R_1(x).$$
Clearly, we have
$$\bar{ F}_{1,i+1}(x)=f_1(x),\;  \bar{ F}_{2,i+1}(x)=f_2(x)$$
 $$F_{1,i+1}(x)-F_{1,i}(x) \in I_{i}[x],\; F_{2,i+1}(x)-F_{2,i}(x) \in I_{i}[x]$$
Moreover, we have
$$F(x)-F_{1,i+1}(x)F_{2,i+1}(x)=$$
$$(F(x)-F_{1,i}(x)F_{2,i}(x))-F_{1,i}(x)R_1(x)-R_2(x)F_{2,i}(x)-R_2(x)R_1(x)=$$
$$(G(x)-R_1(x)F_{1,i}(x)-R_2(x)F_{2,i}(x))+$$
$$(R_1(x)F_{1,i}(x)-F_{1,i}(x)R_1(x))-R_2(x)R_1(x).$$
Since $(A,I)$ is $\mathcal{F}$-commutative and $I_i^2\subset I_{i+1}$, we deduce that
$$F(x)-F_{1,i+1}(x)F_{2,i+1}(x)\in I_{i+1}[x].$$
Having constructed the desired sequences, we proceed as follows. Since $A$ is $\mathcal{F}$-complete,
the limits $$F_1(x)=\lim_{i\to\infty} F_{1,i}(x)\, \text{and}\, F_2(x)=\lim_{i\to\infty} F_{2,i}(x)$$ exist. Clearly, we have $\bar{ F}_{1}(x)=f_1(x)$ and $ \bar{ F}_{2}(x)=f_2(x)$. Since $A$ is $F$-Hausdorff, we have $F(x)=F_1(x)F_2(x)$ and $F_1,F_2$ are monic polynomials. Since $(A,I)$ is $\mathcal{F}$-commutative, it is easy to see that $I^{(n)}\subset I_n$ for all $n$. It follows that $(A,I)$ is also separated with respect to its commutator filtration. Therefore, 
by Proposition \ref{uniquenesslifting}, this factorization is unique, and we are done. 
\end{proof}
  
\end{subsection}
\end{section} 
\begin{section}{Noncommutative Henselizations}
The notion of the Henselization of a pair has been introduced in the commutative case, see \cite{lafon1963anneaux}. 
It turns out that one can develop a similar theory in the noncommutative case. However, we focus our attention on a special subcategory $\mathcal{P}_0$ of the category $\mathcal{P}$ of pairs and prove that every object in this subcategory has a left (and a right) Henselization in $\mathcal{P}_0$, see Theorem \ref{NHenselCom}. Our treatment of Henselization is somewhat similar to the one given in \cite{greco1969henselization}.
\begin{subsection}{The category of perfect pairs}

A pair $(A,I)$ is called \textit{perfect} if $(A,I)$ is separated with respect to its commutator filtration, that is, $
\cap_{n=1}^\infty I^{(n)}=\{0\}$. The full subcategory of the category $\mathcal{P}$ consisting of perfect pairs is denoted by $\mathcal{P}_0$. For any pair $(A,I)$, it is easy to see that the pair 
$$\digamma{(A,I)}=(\frac{A}{\cap_{n=1}^\infty I^{(n)}},\frac{I}{\cap_{n=1}^\infty I^{(n)}})$$
is a perfect pair. Furthermore, the assignment $(A,I)\mapsto \digamma{(A,I)}$ gives rise to a functor $\digamma:\mathcal{P}\to \mathcal{P}_0$. One can easily check that $\digamma$ is a left adjoint of the inclusion function $\iota:\mathcal{P}_0\to \mathcal{P}$. 

\end{subsection}
\begin{subsection}{Perfect Jacobson pairs}
The reason for restricting our attention to $\mathcal{P}_0$ is in the following result.

\begin{proposition}\label{uniquenesslifting}
Every perfect Jacobson pair is a UFP. 
\end{proposition}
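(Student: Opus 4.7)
The plan is to establish the LUFP property; the RUFP statement is proved by the same argument with right coprimality and the right-handed syzygies throughout. Suppose two factorizations $F(x) = F_1(x) F_2(x) = F_1'(x) F_2'(x)$ both lift a left coprime factorization $\bar{F}(x) = f_1(x) f_2(x)$ in $(A/I)[x]$. Set $P(x) = F_1(x) - F_1'(x)$ and $Q(x) = F_2(x) - F_2'(x)$. Since $F_1, F_1'$ are monic of common degree $\deg f_1$, and likewise for $F_2, F_2'$, both $P, Q$ lie in $I[x]$ with $\deg P < \deg F_1$ and $\deg Q < \deg F_2$. Perfectness gives $\bigcap_n I^{(n)} = \{0\}$, so it suffices to show by induction on $n$ that $P, Q \in I^{(n)}[x]$ for every $n \ge 1$.

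The base case $n=1$ is immediate. For the inductive step, assume $P, Q \in I^{(n)}[x]$ and reduce modulo $I^{(n+1)}$, writing $\tilde{A} = A/I^{(n+1)}$ and $\tilde{X}$ for the image of any $X \in A[x]$. The defining property $[A, I^{(n)}] \subset I^{(n+1)}$ of the commutator filtration forces the coefficients of $\tilde{P}$ and $\tilde{Q}$ to be central in $\tilde{A}$, and this centrality is the surrogate for commutativity that will make the usual argument go through. Expanding $F_1 F_2 = F_1' F_2'$ in two ways gives the syzygies $P F_2 + F_1' Q = 0$ and $F_1 Q + P F_2' = 0$ in $A[x]$.

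The main step, and the one I expect to carry the real content, is to deduce $\tilde{Q} \in \tilde{A}[x] \tilde{F}_2$. From the first syzygy, $\tilde{F}_1' \tilde{Q} = -\tilde{P} \tilde{F}_2 \in \tilde{A}[x] \tilde{F}_2$, so $\tilde{F}_1' \tilde{Q} = r \tilde{F}_2$ for some $r \in \tilde{A}[x]$. The pair $(\tilde{A}, I/I^{(n+1)})$ is still Jacobson (the image of the Jacobson radical under a surjection lies in the Jacobson radical), and $\tilde{F}_1', \tilde{F}_2$ reduce further modulo $I/I^{(n+1)}$ to the left coprime pair $f_1, f_2$; so Lemma \ref{coprime-reduction} supplies $\tilde{H}_1 \tilde{F}_1' + \tilde{H}_2 \tilde{F}_2 = 1$ for some $\tilde{H}_1, \tilde{H}_2 \in \tilde{A}[x]$. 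Right-multiplying by $\tilde{Q}$ and using centrality to swap $\tilde{F}_2 \tilde{Q} = \tilde{Q} \tilde{F}_2$ yields $\tilde{Q} = \tilde{H}_1 r \tilde{F}_2 + \tilde{H}_2 \tilde{Q} \tilde{F}_2 = (\tilde{H}_1 r + \tilde{H}_2 \tilde{Q}) \tilde{F}_2$. Because $\tilde{F}_2$ is monic and $\deg \tilde{Q} < \deg \tilde{F}_2$, this forces $\tilde{Q} = 0$, i.e., $Q \in I^{(n+1)}[x]$. Applying the same argument to the second syzygy, with $\tilde{F}_1$ and $\tilde{F}_2'$ playing the roles of $\tilde{F}_1'$ and $\tilde{F}_2$, gives $P \in I^{(n+1)}[x]$ and closes the induction.
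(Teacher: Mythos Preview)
Your proof is correct and follows essentially the same strategy as the paper: both arguments use Lemma~\ref{coprime-reduction} to lift left coprimality to $A$ (or to $A/I^{(n+1)}$), exploit the centrality of the difference $F_2-F_2'$ coming from $[A,I^{(n)}]\subset I^{(n+1)}$, and finish with a degree comparison against a monic polynomial. The only difference is organizational: the paper argues by contradiction (choose the maximal $d$ with $N\in I^{(d)}[x]$ and show $N\in I^{(d+1)}[x]$), while you run a direct induction on $n$; and the paper manipulates $F_2=H_1F_1F_2+H_2G_2F_2$ directly, whereas you pass through the syzygy $PF_2+F_1'Q=0$. Incidentally, once you have $\tilde{Q}=0$, the second syzygy gives $\tilde{P}\tilde{F}_2'=0$ immediately, so you don't actually need to rerun the full argument for $P$.
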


\begin{proof}
Let $(A,I)$ be a perfect Jacobson pair. We only show that $(A,I)$ is an LUFP. The proof that A is an RUFP is similar. 
Assume, on the contrary, that there are different factorizations $$F(x)=F_1(x)F_2(x)=G_1(x)G_2(x)$$
of a monic polynomial $F(x)\in A[x]$ such that $f_1(x)=\bar{F}_1(x)=\bar{G}_1(x)$ and 
$f_2(x)=\bar{F}_2(x)=\bar{G}_2(x)$ are left coprime monic polynomials. Without loss of generality, we may assume  
$F_2(x)\neq G_2(x)$. The facts that $F_2(x)$ and $G_2(x)$ are monic polynomials, and $\bar{F}_2(x)=\bar{G}_2(x)$, imply that there exists a polynomial $N(x)\in I[x]$ such that $$F_2(x)=G_2(x)+N(x)\; \text{and} \; \deg(N(x))<\deg(G_2(x)).$$
Since $\cap_{n=1}^\infty I^{(n)}=\{0\}$ and $N(x)\neq 0$, there exists $d\geq 1$ such that 
$$N(x)\in I^{(d)}[x]\; \text{but} \; N(x)\notin I^{(d+1)}[x].$$ 
Since $\bar{F}_1(x)$ and $\bar{G}_2(x)$ are left coprime, it follows from Lemma \ref{coprime-reduction}
that there are polynomials $H_1(x),H_2(x)\in A[x]$ such that
$$H_1(x)F_1(x)+H_2(x)G_2(x)=1.$$
We can write
$$F_2(x)=H_1(x)F_1(x)F_2(x)+H_2(x)G_2(x)F_2(x)=$$
$$(H_1(x)G_1(x)+H_2(x)F_2(x))G_2(x)+H_2(x)(G_2(x)N(x)-N(x)G_2(x)).$$
\newline
Setting $K(x)=H_1(x)G_1(x)+H_2(x)F_2(x)$, we see that $F_2(x)=K(x)G_2(x)$ as elements of $(A/I^{(d+1)})[x]$, because $(A,I)$ is commutative with respect to its commutator filtration. Since $F_2(x)$ and $G_2(x)$ are monic polynomials of the same degree, we conclude that $F_2(x)=G_2(x)$ as  elements of $(A/I^{(d+1)})[x]$, that is, $N(x)=F_2(x)-G_2(x)\in I^{(d+1)}[x],$ a contradiction . 

\end{proof}

\end{subsection}
\begin{subsection}{Factorizations of polynomials over perfect pairs}
 In this part, we prove the following result.  
\begin{proposition}\label{factorizationN}
Let $(A,I)$ be a perfect pair and $F(x)\in A[x]$ be a monic polynomial. Suppose that $\bar{F}(x)$ has a factorization $\bar{F}(x)=f_1(x)f_2(x)$ over $A/I$ where $f_1(x),f_2(x)\in (A/I)[x]$ are left coprime monic polynomials. Then, there exists a perfect Jacobson pair
$(A\langle F;f_1,f_2\rangle,I\langle F;f_1,f_2\rangle)$ and a morphism
$$\Phi_{\langle F;f_1,f_2\rangle}:(A,I)\to  (A\langle F;f_1,f_2\rangle,I\langle F;f_1,f_2\rangle)$$ of pairs having the following universal property. For every morphism 
$$\phi:(A,I)\to (B,K)$$ of pairs, where $(B,K)$ is a perfect Jacobson pair,  if $\phi(F(x))=G_1(x)G_2(x)$ for some monic polynomials 
$G_1(x), G_2(x)\in B[x]$ such that
$$\bar{G}_1(x)=\bar{\phi}(f_1(x)), \bar{G}_2(x)=\bar{\phi}(f_2(x)),$$
then there exists a unique morphism 
$$\psi:(A\langle F;f_1,f_2\rangle,I\langle F;f_1,f_2\rangle)\to (B,K)$$ of pairs such that $\phi=\psi\circ \Phi_{\langle F;f_1,f_2\rangle}$. 
\end{proposition}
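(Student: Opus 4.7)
The plan is to construct $(A\langle F;f_1,f_2\rangle, I\langle F;f_1,f_2\rangle)$ in three stages: first produce a ring $B$ carrying a formal lift of the factorization, then localize so the natural ideal becomes contained in the Jacobson radical, and finally pass to a perfect quotient via the functor $\digamma$.

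Fix monic lifts $\hat f_1, \hat f_2 \in A[x]$ of $f_1, f_2$ of degrees $n_1 = \deg f_1$ and $n_2 = \deg f_2$. Let $B$ be the ring obtained from $A$ by adjoining noncommuting indeterminates $u_0, \dots, u_{n_1-1}, v_0, \dots, v_{n_2-1}$ and quotienting by the two-sided ideal $J$ generated by the $x$-coefficients of
$$F(x) - \bigl(\hat f_1(x) + \textstyle\sum_i u_i x^i\bigr)\bigl(\hat f_2(x) + \textstyle\sum_j v_j x^j\bigr),$$
and let $I_B \subseteq B$ be the ideal generated by (the image of) $I$ and all the $u_i, v_j$. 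Since $\bar F = f_1 f_2$ in $(A/I)[x]$, the relations of $J$ collapse modulo $I_B$, so $B/I_B \cong A/I$ and the canonical map $A \to B$ is a morphism $(A,I) \to (B, I_B)$. Next apply the construction of Subsection \ref{localizationsection} to the surjection $\pi: B \to A/I$ to obtain $B' = B_\pi$ with a local ring homomorphism $\Psi_\pi: B' \to A/I$, and set $I_{B'} = \ker \Psi_\pi$. For any $a \in I_{B'}$ and $b \in B'$, the identity $\Psi_\pi(1+ba) = 1$ together with locality of $\Psi_\pi$ forces $1 + ba$ to be a unit in $B'$; hence $I_{B'} \subseteq \mathrm{rad}(B')$ and $(B', I_{B'})$ is Jacobson, with $B'/I_{B'} \cong A/I$. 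Finally put $N = \bigcap_{n\ge 1} (I_{B'})^{(n)}$ and define
$$A\langle F;f_1,f_2\rangle := B'/N, \qquad I\langle F;f_1,f_2\rangle := I_{B'}/N,$$
with $\Phi_{\langle F;f_1,f_2\rangle}: A \to B \to B' \to B'/N$ the induced map. The resulting pair is perfect by construction, remains Jacobson since the unit property of $1 + ba$ passes to the quotient, and satisfies $\Phi^{-1}(I\langle F;f_1,f_2\rangle) = I$ because $B'/I_{B'} \cong A/I$.

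For the universal property, given $\phi:(A,I) \to (B_0, K_0)$ perfect Jacobson with $\phi(F) = G_1 G_2$ lifting $f_1, f_2$ as in the statement, first extend $\phi$ to $\tilde\psi: B \to B_0$ by sending each $u_i$ and $v_j$ to the corresponding coefficient of $G_1 - \phi(\hat f_1)$ and $G_2 - \phi(\hat f_2)$; these lie in $K_0$ by hypothesis, and the relations of $J$ are satisfied because $\phi(F) = G_1 G_2$. Since $K_0 \subseteq \mathrm{rad}(B_0)$, every element of $B$ whose image in $A/I$ is a unit is sent by $\tilde\psi$ to a unit of $B_0$; the iterative construction of Subsection \ref{localizationsection} therefore yields a unique extension $\psi: B' \to B_0$ with $\psi(I_{B'}) \subseteq K_0$. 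Perfection of $(B_0, K_0)$ gives $\psi(N) \subseteq \bigcap_n K_0^{(n)} = \{0\}$, so $\psi$ descends to a morphism of pairs $(B'/N, I_{B'}/N) \to (B_0, K_0)$ factoring $\phi$ through $\Phi_{\langle F;f_1,f_2\rangle}$. Uniqueness rests on Proposition \ref{uniquenesslifting}: any alternative $\psi'$ yields monic polynomials $\phi(\hat f_1) + \psi'(u(x))$ and $\phi(\hat f_2) + \psi'(v(x))$ in $B_0[x]$ factoring $\phi(F)$ and lifting $f_1, f_2$, and since perfect Jacobson pairs are UFPs, this factorization must coincide with $G_1 G_2$, which pins down the values of $\psi'$ on the $u_i, v_j$ and hence $\psi'$ itself.

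The main obstacle will be keeping the three construction stages aligned with the universal property. The Jacobson hypothesis on the target is used twice: to see that $(B', I_{B'})$ itself is Jacobson (via locality of $\Psi_\pi$) and to extend $\tilde\psi$ through the localization (via the classical fact that units modulo a Jacobson radical are units). The perfection hypothesis on the target powers both the descent $\psi(N) = 0$ and, via Proposition \ref{uniquenesslifting}, the uniqueness of $\psi$; this is the sole place where the restriction to $\mathcal{P}_0$ is indispensable.
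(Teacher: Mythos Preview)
Your proof is correct and follows essentially the same three-stage construction as the paper: form a free $A$-ring with relations encoding the factorization, apply the localization of Subsection~\ref{localizationsection} to the resulting map to $A/I$, and then pass to the perfect quotient via $\digamma$; the universal property and uniqueness are verified in the same way, invoking the Jacobson hypothesis for the localization step and Proposition~\ref{uniquenesslifting} for uniqueness. The only cosmetic difference is that you adjoin perturbation variables $u_i,v_j$ relative to fixed lifts $\hat f_1,\hat f_2$, whereas the paper adjoins variables $y_i,z_j$ standing for the coefficients themselves; the two presentations are related by the affine change $y_i=\hat f_{1,i}+u_i$, $z_j=\hat f_{2,j}+v_j$ and yield isomorphic rings.
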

\begin{proof}
First, we give a construction of the pair $(A\langle F;f_1,f_2\rangle,I\langle F;f_1,f_2\rangle)$ after which we prove its universal property. 
Let $$F(x)=a_0+a_1x+\cdots+a_{d-1}x^{d-1}+x^d,$$ 
$$f_1(x)=b_0+b_1x+\cdots+b_{d_1-1}x^{d_1-1}+x^{d_1},$$
$$f_2(x)=c_0+c_1x+\cdots+c_{d_2-1}x^{d_2-1}+x^{d_2}.$$
We consider the free $A$-ring
$A\langle y_0,...,y_{d_1-1},z_0,...,z_{d_2-1}\rangle$  generated by the noncommutating variables 
$y_0,...,y_{d_1-1},z_0,...,z_{d_2-1}$.  We have 
$$(y_0+y_{1}x+\cdots+y_{d_1-1}x^{d_1-1}+x^{d_1})(z_0+z_{1}x+\cdots+z_{d_2-1}x^{d_2-1}+x^{d_2})=$$
$$g_0+g_1x+\cdots+
g_{d-1}x^{d-1}+x^d$$
where $g_0,...,g_{d-1}\in A\langle y_0,...,y_{d_1-1},z_0,...,z_{d_2-1}\rangle$.  
Consider the ring homomorphism
$$\alpha:A\langle y_0,...,y_{d_1-1},z_0,...,z_{d_2-1}\rangle\to \frac{A}{I}$$ defined by
$$\alpha(y_0)=b_0,...,\alpha(y_{d_1-1})=b_{d_1-1}, \alpha(z_0)=c_0,...,\alpha(z_{d_2-1})=c_{d_2-1},$$
$$\alpha(a)=\bar{a}\, \;\text{where}\; a\in A.$$
The ideal  $\langle g_0-a_0,...,g_{d-1}-a_{d-1}\rangle$ generated by $g_0-a_0,...,g_{d-1}-a_{d-1}$ is contained in the kernel of $\alpha$
because $\bar{F}(x)=f_1(x)f_2(x)$. Therefore, $\alpha$ gives rise to a  ring homomorphism 
$$\beta:\frac{A\langle y_0,...,y_{d_1-1},z_0,...,z_{d_2-1}\rangle}{\langle g_0-a_0,...,g_{d-1}-a_{d-1}\rangle}\to \frac{A}{I}$$
Consider the following localization ring (see subsection \ref{localizationsection})
$$R= \Big{(}
\frac{A\langle y_0,...,y_{d_1-1},z_0,...,z_{d_2-1}\rangle}{\langle g_0-a_0,...,g_{d-1}-a_{d-1}\rangle}\Big{)}_\beta.$$
We have canonical ring homomorphisms $\gamma:R\to  \frac{A}{I}$
and  $\eta:A\to R$
which satisfy $\gamma(\eta(a))=\bar{a}$ for every $a\in A$, see Proposition \ref{localization}. We set $J=\ker(\gamma)$. 
By Proposition \ref{localization}, $\gamma$ is local from which it follows that $J\subset  rad (A\langle F;f_1,f_2\rangle)$, that is, $(R,J)$ is a Jacobson pair. 
Since the quotient homomorphism $A\to A/I$ is a morphism $(A,I)\to (A/I,0)$ of pairs, we can use Lemma \ref{morphism} to deduce that 
$\eta:(A,I)\to (R,J)$ is a morphism of pairs.
Finally, we set
$$(A\langle F;f_1,f_2\rangle,I\langle F;f_1,f_2\rangle)=\digamma{(R,J)}$$
The canonical morphism $\eta:(A,I)\to (R,J)$ composed with the quotient morphism $(R,I)\to \digamma{(R,J)}$ gives a morphism 
$$\Phi_{\langle F;f_1,f_2\rangle}:(A,I)\to (A\langle F;f_1,f_2\rangle,I\langle F;f_1,f_2\rangle)$$
of pairs. 

To prove the universal property of $\Phi_{\langle F;f_1,f_2\rangle}$, let $\phi:(A,I)\to (B,K)$ be a morphism of pairs where $(B,K)$ is a perfect Jacobson pair.  Suppose that $\phi(F)(x)=G_1(x)G_2(x)$ where $G_1(x), G_2(x)\in B[x]$ are monic polynomials and 
$$\bar{G}_1=\bar{\phi}(f_1), \bar{G}_2=\bar{\phi}(f_2).$$
Let $$G_1(x)=e_0+e_1x+\cdots+e_{d_1-1}x^{d_1-1}+x^{d_1},$$
$$G_2(x)=f_0+f_1x+\cdots+f_{d_2-1}x^{d_2-1}+x^{d_2}.$$
One can easily check that the assignments 
$$y_0\mapsto e_0,...,y_{d_1-1}\mapsto e_{d_1-1}, z_0\mapsto f_0,...,z_{d_2-1}\mapsto f_{d_2-1}$$
yield a ring homomorphism 
$$\psi_1:\frac{A\langle y_0,...,y_{d_1-1},z_0,...,z_{d_2-1}\rangle}{\langle g_0-a_0,...,g_{d-1}-a_{d-1}\rangle}\to B$$
which extends the ring homomorphism $\phi$. 
Furthermore, since $K\subset rad(B)$ and $(B,K)$ is perfect, one can extend $\psi_1$ to a ring homomorphism
$$\psi:A\langle F;f_1,f_2\rangle\to B$$
satisfying $\phi=\psi\circ \Phi_{\langle F;f_1,f_2\rangle}$ as ring homomorphisms. 
Since $\bar{G}_1(x)=\bar{\phi}(f_1(x)), \bar{G}_2=\bar{\phi}(f_2(x))$, the following diagram is commutative
$$\begin{array}{ccc}
A\langle F;f_1,f_2\rangle & \xrightarrow{\psi} & B \\
\downarrow{} &  & \downarrow  \\
A/I & \xrightarrow{\bar{\phi}} & B/K 
\end{array}$$
Using the commutativity of this diagram and Lemma \ref{morphism}, we conclude that 
$$\psi:(A\langle F;f_1,f_2\rangle, I\langle F;f_1,f_2\rangle)\to (B,K)$$
is, in fact, a morphism of pairs. 
The uniqueness of $\psi$ follows from the fact that $(B,K)$ is a UFP by Proposition \ref{uniquenesslifting}. 
\end{proof}

\end{subsection} 
\begin{subsection}{LF-extensions}
Let $(A,I)$ be a perfect pair. A morphism $\phi:(A,I)\to (B,J)$ of pairs is called a \textit{simple left factorization extension} (or simple \textit{LF-extension}  for short) of $(A,I)$ if $\phi=\Phi_{\langle F;f_1,f_2\rangle }$ for some polynomials $F(x)\in A[x]$, $f_1(x),f_2(x)\in 
(A/I)[x]$ satisfying the conditions in Proposition \ref{factorizationN}. An \textit{LF-extension} of $(A,I)$ is a morphism $\phi:(A,I)\to (B,J)$ of pairs which is obtained by a finite sequence of simple LF-extensions, that is, there are simple LF-extensions 
$$\phi_i:(A_i,I_i)\to (A_{i+1},I_{i+1}), \, \text{where}\; i=1,...,d,$$ such that $(A_1,I_1)=(A,I)$, $(A_{d+1},I_{d+1})=(B,J)$ and $\phi=\phi_{d}\circ\phi_{d-1}\circ\cdots\circ\phi_1.$

Obviously, the collection of all LF-extensions of a perfect pair $(A,I)$ is a set which we denote by $LFext(A,I)$. Given morphisms 
$$\phi_1:(A,I)\to (B_1,J_1),\; \text{and}\; \phi_2:(A,I)\to (B_2,J_2)$$ in $LFext(A,I)$, we write $\phi_1\leq \phi_2$ if there exists a morphism 
$$\psi: (B_1,J_1)\to (B_2,J_2)$$ of pairs such that $\phi_2=\psi\circ\phi_1$.  Clearly, the relation $\leq$ defines a partial order on $LFext(A)$. Furthermore, we have the following result. 
\begin{lemma}\label{directedN}
Let $(A,I)$ be a perfect pair. (i) For all $\phi_1:(A,I)\to (B_1,J_1)$ and  $\phi_2:(A,I)\to (B_2,J_2)$ in $LFext(A)$, there exists at most one morphism $\psi: (B_1,J_1)\to (B_2,J_2)$ of pairs such that $\phi_2=\psi\circ \phi_1$. (ii) The partial order $\leq$ on $LFext(A)$ is directed. 
\end{lemma}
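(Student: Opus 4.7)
Both parts will proceed by induction on the length of a defining chain of simple LF-extensions, exploiting the universal property of Proposition \ref{factorizationN} together with the fact (Proposition \ref{uniquenesslifting}) that every perfect Jacobson pair is a UFP. Recall that every nontrivial simple LF-extension of a perfect pair lands in a perfect Jacobson pair by construction, so the intermediate codomains automatically satisfy the hypotheses needed to invoke uniqueness.

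For part (i), I would induct on the length $d$ of a decomposition of $\phi_1$ into simple LF-extensions. When $d=0$ (so $\phi_1$ is the identity) the claim is trivial, since $\psi$ is forced to equal $\phi_2$. For $d\geq 1$, write $\phi_1=\phi_1'\circ\Phi_{\langle F;f_1,f_2\rangle}$, where $\Phi_{\langle F;f_1,f_2\rangle}:(A,I)\to (A',I')$ is the first simple LF-extension and $\phi_1'$ has length $d-1$. Given two candidates $\psi,\psi':(B_1,J_1)\to (B_2,J_2)$ with $\psi\circ\phi_1=\psi'\circ\phi_1=\phi_2$, the compositions $\psi\circ\phi_1'$ and $\psi'\circ\phi_1'$ are both morphisms $(A',I')\to (B_2,J_2)$ extending $\phi_2$ along $\Phi_{\langle F;f_1,f_2\rangle}$. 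By the construction in Proposition \ref{factorizationN}, any such extension is determined by a factorization of $\phi_2(F)$ in $B_2[x]$ lifting $\bar{\phi}_2(f_1)\bar{\phi}_2(f_2)$; since $(B_2,J_2)$ is a UFP, this lift is unique, forcing $\psi\circ\phi_1'=\psi'\circ\phi_1'$. The inductive hypothesis applied to $\phi_1'$ then yields $\psi=\psi'$.

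For part (ii), given $\phi_1:(A,I)\to (B_1,J_1)$ and $\phi_2:(A,I)\to (B_2,J_2)$ in $LFext(A,I)$, decompose $\phi_2$ as
$$(A,I)=(A_0,I_0)\xrightarrow{\phi_{2,1}}(A_1,I_1)\xrightarrow{\phi_{2,2}}\cdots\xrightarrow{\phi_{2,d}}(A_d,I_d)=(B_2,J_2),$$
where $\phi_{2,i+1}=\Phi_{\langle F^{(i+1)};f_1^{(i+1)},f_2^{(i+1)}\rangle}$. I would construct by induction on $i$ a pair $(C_i,K_i)\in\mathcal{P}_0$, an LF-extension $\alpha_i:(B_1,J_1)\to (C_i,K_i)$, and a morphism $\beta_i:(A_i,I_i)\to (C_i,K_i)$ satisfying $\beta_i\circ\phi_{2,i}\circ\cdots\circ\phi_{2,1}=\alpha_i\circ\phi_1$, starting from $(C_0,K_0)=(B_1,J_1)$, $\alpha_0=\mathrm{id}$, $\beta_0=\phi_1$. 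At step $i+1$, push $F^{(i+1)}$ forward via $\beta_i$: the polynomial $\beta_i(F^{(i+1)})\in C_i[x]$ reduces modulo $K_i$ to $\bar{\beta}_i(f_1^{(i+1)})\bar{\beta}_i(f_2^{(i+1)})$, a product of left coprime monics (left coprimeness is preserved by ring homomorphisms). This data defines a simple LF-extension $\gamma_{i+1}:(C_i,K_i)\to (C_{i+1},K_{i+1})$; set $\alpha_{i+1}=\gamma_{i+1}\circ\alpha_i$. The universal property of $\phi_{2,i+1}$ applied to $\gamma_{i+1}\circ\beta_i$, using the tautological factorization of the image of $\beta_i(F^{(i+1)})$ inside $C_{i+1}$, produces the desired $\beta_{i+1}$. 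After $d$ iterations, the morphism $\phi=\alpha_d\circ\phi_1=\beta_d\circ\phi_2$ lies in $LFext(A,I)$ and witnesses $\phi_1\leq\phi$ via $\alpha_d$ and $\phi_2\leq\phi$ via $\beta_d$.

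The main obstacle I expect is the bookkeeping in (ii): checking that the compatibility relation $\beta_{i+1}\circ\phi_{2,i+1}\circ\cdots\circ\phi_{2,1}=\alpha_{i+1}\circ\phi_1$ propagates cleanly through each induction step, and that the pushed-forward data really does satisfy the hypotheses of Proposition \ref{factorizationN}. Both reduce to routine diagram chases together with preservation of left coprimeness under the quotient map. Part (i) is comparatively clean: it is essentially a rigidity statement that reads off from the UFP property of the target pair combined with the universal property of each simple LF-extension.
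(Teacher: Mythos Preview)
Your proposal is correct and follows essentially the same approach as the paper: part (i) by induction on the length of $\phi_1$ using the UFP property of the target (Proposition~\ref{uniquenesslifting}), and part (ii) by iteratively pushing the simple LF-extension data from one chain onto the other and invoking the universal property of Proposition~\ref{factorizationN}. The only cosmetic difference is that in (ii) the paper inducts on the length of $\phi_1$ (pushing its simple steps over $\phi_2$) whereas you induct on the length of $\phi_2$; the two arguments are mirror images of one another, and your write-up is in fact more detailed than the paper's sketch.
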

\begin{proof}
Part (i) can be proved using induction and the fact that $(B_2,J_2)$ is a UFP. To prove (ii), we first assume that 
$\phi_1:(A,I)\to (B_1,J_1)$ is a simple LF-extension. Therefore, $\phi=\Phi_{\langle F;f_1,f_2\rangle }$ for some polynomials $F(x)\in A[x]$, $f_1(x),f_2(x)\in (A/I)[x]$ satisfying the conditions in Proposition \ref{factorizationN}. Let $G_1(x)=\phi_2(F(x))$,
$g_1(x)=\bar{\phi}_2(F(x))$ and $g_2(x)=\bar{\phi}_2(F(x))$. It is easy to see that the polynomials $G_1(x),g_1(x),g_2(x)$ satisfy the conditions in Proposition \ref{factorizationN}, giving rise to a morphism 
$$\Phi_{\langle G;g_1,g_2\rangle}:(B_2,J_2)\to (B\langle G;g_1,g_2\rangle,I\langle G;g_1,g_2\rangle)$$
of pairs. Clearly 
$$\Phi_{\langle G;g_1,g_2\rangle}\circ\phi_2:(A,I)\to (B\langle G;g_1,g_2\rangle,I\langle G;g_1,g_2\rangle)$$
is an LF-extension. By the universal property of $\phi=\Phi_{\langle F;f_1,f_2\rangle }$, we see that there exists a morphism 
$$\psi:(B_1,J_1)\to (B\langle G;g_1,g_2\rangle,I\langle G;g_1,g_2\rangle)$$
such that $\Phi_{\langle G;g_1,g_2\rangle}\circ\phi_2=\psi\circ\phi_1$. It follows that 
$$\phi_1\leq \Phi_{\langle G;g_1,g_2\rangle}\circ\phi_2\;\text{and} \; \phi_2\leq \Phi_{\langle G;g_1,g_2\rangle}\circ\phi_2.$$
The general case is proved by induction. 
\end{proof}
\end{subsection}
\begin{subsection}{Left Henselizations}
In this subsection, we prove the following result concerning the concept of Henselization.  
\begin{theorem}\label{NHenselCom}
Let $(A,I)$ be a perfect pair.  Then, there exists a left Henselian 
pair $(A^{lh},I^{lh})$ and a morphism 
$\phi^{lh}:(A,I)\to (A^{lh},I^{lh})$ of pairs having the following universal property. For every morphism $\phi:(A,I)\to (B,J)$ of pairs from $(A,I)$ to a left Henselian prefect pair $(B,J)$, there exists a unique morphism 
$\psi:(A^{lh},I^{lh})\to (B,J)$ of pairs such that $\phi=\psi \circ\phi^{lh}$. 
\end{theorem}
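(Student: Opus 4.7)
The plan is to realize $(A^{lh}, I^{lh})$ as the filtered colimit of the directed system $LFext(A,I)$ of LF-extensions of $(A,I)$, followed by perfectification via the functor $\digamma$. By Lemma \ref{directedN}(ii) the set $LFext(A,I)$ is directed, and by Lemma \ref{directedN}(i) any two comparable objects are linked by a unique connecting morphism, so the assignment $\phi \mapsto (B_\phi, J_\phi)$ is a genuine filtered diagram in $\mathcal{P}$. Set $(R,K) = \varinjlim_{\phi \in LFext(A,I)} (B_\phi, J_\phi)$ and $(A^{lh}, I^{lh}) = \digamma(R,K)$, with $\phi^{lh}$ the composition of the canonical map $(A,I) \to (R,K)$ into the colimit with the quotient $(R,K) \to \digamma(R,K)$.

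The first step is to verify that $(R,K)$, hence $(A^{lh}, I^{lh})$, is a Jacobson pair. Every $k \in K$ descends from some $J_\phi$, and since each $(B_\phi, J_\phi)$ is a Jacobson pair by Proposition \ref{factorizationN}, $1+k$ is already invertible in $B_\phi$, hence in $R$; passage to the quotient $\digamma(R,K)$ preserves this. The second and critical step is to verify Hensel's lemma in $(A^{lh}, I^{lh})$. Given a monic $F(x) \in A^{lh}[x]$ and a left coprime monic factorization $\bar F(x) = f_1(x) f_2(x)$ in $(A^{lh}/I^{lh})[x]$, I would use that polynomial rings commute with filtered colimits and that finitely many equations in a filtered colimit are witnessed at some finite stage, to find an LF-extension $\psi: (A,I) \to (B_\psi, J_\psi)$ carrying monic lifts $\tilde F(x) \in B_\psi[x]$ of $F$ and $\tilde f_1(x), \tilde f_2(x) \in (B_\psi/J_\psi)[x]$ of $f_1, f_2$, such that both the equation $\bar{\tilde F}(x) = \tilde f_1(x)\tilde f_2(x)$ and a B\'ezout identity witnessing the left coprimeness of $\tilde f_1, \tilde f_2$ already hold in $(B_\psi/J_\psi)[x]$. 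The composition of $\psi$ with the simple LF-extension
$$\Phi_{\langle \tilde F; \tilde f_1, \tilde f_2\rangle}: (B_\psi, J_\psi) \to (B_\psi\langle \tilde F; \tilde f_1, \tilde f_2\rangle, J_\psi\langle \tilde F; \tilde f_1, \tilde f_2\rangle)$$
is itself an LF-extension of $(A,I)$, hence an object of $LFext(A,I)$, and its canonical map into $(A^{lh}, I^{lh})$ transports the tautological factorization of $\tilde F$ to the desired factorization of $F$ lifting $f_1 f_2$. Uniqueness of the lift is automatic, since $(A^{lh}, I^{lh})$ is a perfect Jacobson pair and is therefore a UFP by Proposition \ref{uniquenesslifting}.

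The third step is the universal property. Given a morphism $\phi: (A,I) \to (B,J)$ into a perfect left Henselian pair, I would construct by induction on the length of a chain of simple LF-extensions a compatible family of morphisms $(B_{\phi'}, J_{\phi'}) \to (B,J)$ for every $\phi' \in LFext(A,I)$: for each simple step $\Phi_{\langle F; f_1, f_2\rangle}$ extending an already-constructed morphism to $(B,J)$, left Henselianity of $(B,J)$ produces a factorization of the image of $F$ in $B[x]$ lifting the reduction of $f_1 f_2$, and the universal property in Proposition \ref{factorizationN} yields the required extension. Uniqueness at each step is Lemma \ref{directedN}(i). Passing to the filtered colimit and then invoking the adjunction $\digamma \dashv \iota$, which applies because $(B,J)$ already lies in $\mathcal{P}_0$, produces the unique morphism $(A^{lh}, I^{lh}) \to (B,J)$ through which $\phi$ factors.

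The principal obstacle is the descent argument inside Step 2: one must bring the polynomial $F$, its coprime factorization, and the accompanying B\'ezout identity down to a common finite-stage pair $(B_\psi, J_\psi)$ and ensure that the relevant equalities actually hold there rather than merely in the colimit. This is really an instance of the general fact that a finite system of equations in a filtered colimit of rings is witnessed at some object of the system, applied to the polynomial ring. Once this descent is in hand, the simple LF-extension construction closes the loop by producing a new object of $LFext(A,I)$ on which the sought factorization exists tautologically, and the rest of the proof is formal.
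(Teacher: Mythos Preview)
Your argument follows the same route as the paper's: form the filtered colimit of $LFext(A,I)$, verify the Jacobson condition by lifting invertibility of $1+k$ from a finite stage, prove Hensel's lemma by descending the monic polynomial and its coprime reduced factorization to a finite stage and tacking on one further simple LF-extension there, and obtain uniqueness of the lift from Proposition~\ref{uniquenesslifting}. The universal property is handled the same way as well, via Proposition~\ref{factorizationN} and the universal property of the colimit; you simply spell out the induction over a chain of simple LF-extensions that the paper leaves to the phrase ``properties of direct limits''.

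The one substantive difference is that you pass the colimit $(R,K)$ through $\digamma$ before calling it $(A^{lh},I^{lh})$, whereas the paper just asserts that the colimit is already perfect. Your caution is well placed: it is not immediate that a filtered colimit of perfect pairs is perfect (an element of $\cap_n K^{(n)}$ lands in $J_{\phi_n}^{(n)}$ at a stage $\phi_n$ depending on $n$, with no obvious common stage at which it vanishes), and the paper gives no argument. Applying $\digamma$ is free and closes the issue via the adjunction $\digamma\dashv\iota$, since the target $(B,J)$ is assumed perfect. One small point worth making explicit in your Step~2: when you descend $F$ from $A^{lh}=\digamma(R,K)$ to some $B_\psi$, you must first lift through the surjection $R\to A^{lh}$; this is harmless for a monic polynomial but should be said. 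Note also that the paper's descent is slightly lighter than yours because the construction in Proposition~\ref{factorizationN} forces $B_\psi/J_\psi\cong A/I\cong A^{lh}/I^{lh}$, so $f_1,f_2$ and their B\'ezout relation already live at every stage and need not be descended at all.
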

\begin{proof}
By Lemma \ref{directedN}, the direct limit $(A^{lh},I^{lh})$ of elements in $LFext(A)$ exists. Moreover, it is a perfect pair. 
We also have a canonical morphism 
$$\phi^{lh}:(A,I)\to (A^{lh},I^{lh})$$ of pairs. The universal property of $\phi^{h}$ follows from Proposition 
\ref{factorizationN} and properties of direct limits. So, it remains to show that $(A^{lh},I^{lh})$ is left Henselian. Since 
$(A^{lh},I^{lh})$ is a direct limit of Jacobson pairs, it is a Jacobson pair.  Let a monic polynomial $F(x)\in A^{lh}[x]$ be given such that $\bar{F}(x)=f_1(x)f_2(x)$ for some left coprime
monic polynomials  $f_1(x),f_2(x)\in (A^{lh}/I^{lh})[x]$. Since $F(x)$ has only finitely many (nonzero) coefficients, there exists an LF-extension $\phi:(A,I)\to (B,J)$ such that $F(X)\in B[X]$. By Proposition \ref{factorizationN}, the polynomial $F(X)$ has a factorization $F(x)=F_1(x)F_2(x)$ over $B(F;f_1,f_2)$, hence over $A^{lh}$, such that $F_1,F_2\in A^{lh}[X]$ are monic, and $\bar{F}_1(x)=f_1(x)$, $\bar{F}(x)=f_2(x)$. Since $(A^{lh},I^{lh})$ is a perfect Jacobson ring, it is a UFP, see Proposition \ref{uniquenesslifting}. Therefore,  the factorization $F(x)=F_1(x)F_2(x)$ is unique. It follows that $(A^{lh},I^{lh})$ is a left Henselian pair, and we are done.   
\end{proof}
The pair $(A^{lh},I^{lh})$  is called the \textit{left Henselization} of $(A,I)$. It is easy to see that the pair $(A^{lh},I^{lh})$ is unique up to unique isomorphism. Similarly, one can show that 
every perfect pair $(A,I)$ has a right Henselizaiton $$\phi^{rh}:(A,I)\to (A^{rh},I^{rh})$$ satisfying the corresponding universal property. 
We note that if $A/I$ is, in addition, a commutative ring, then the right Henselization of $(A,I)$ is also the left Henselization of $(A,I)$, and vice versa. 
\end{subsection}
\begin{subsection}{Commutative Henselizations}
A pair $(A,I)$ is called \text{commutative} if $A$ is a commutative ring. It is known that every commutative pair has a Henselization in the category $\mathcal{P}_c$ of commutative pairs, see \cite{lafon1963anneaux}. The Henselization of a commutative pair $(A,I)$ in $\mathcal{P}_c$ is referred to as the commutative Henselization  of $(A,I)$, and is denoted by $(A^{ch},I^{ch})$.
Obviously, the category $\mathcal{P}_c$  is  a full subcategory of $\mathcal{P}_0$.
The following result determines commutative Henselizations in terms of left Henselizations.
\begin{proposition}\label{NHenselCom}
Let $(A,I)$ be a commutative pair.  Then, the ideal $J$ generated by $[A^{lh},A^{lh}]$ is contained in $I^{lh}$. Moreover, the morphism 
$$q\circ \phi^{lh}:(A,I)\to (\frac{A}{J}^{lh},\frac{I}{J}^{lh}),$$
where $q:(A^{lh},I^{lh})\to (A^{lh}/J,I^{lh}/J)$ is the quotient morphism, is the commutative Henselization of $(A,I)$.  
\end{proposition}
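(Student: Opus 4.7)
The plan is to break the argument into three pieces: establish $J\subseteq I^{lh}$, verify the universal property of $q\circ\phi^{lh}$ with respect to commutative Henselian targets, and identify $(A^{lh}/J,I^{lh}/J)$ with $(A^{ch},I^{ch})$ by uniqueness of the commutative Henselization.

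First I would show that the left Henselization preserves the residue ring, namely $A^{lh}/I^{lh}\cong A/I$. Inspecting the proof of Proposition \ref{factorizationN}, the local homomorphism $\gamma:R\to A/I$ is surjective and passing through the functor $\digamma$ does not alter the quotient by the designated ideal; iterating through a finite sequence of simple LF-extensions and then taking the directed colimit (which commutes with this quotient) preserves the identification with $A/I$ at every stage. Since $A$ is commutative so is $A/I$, hence $[A^{lh},A^{lh}]$ maps to zero in $A^{lh}/I^{lh}$, yielding $J\subseteq I^{lh}$.

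Next I would verify the universal property. Let $\phi:(A,I)\to(C,L)$ be a morphism with $(C,L)$ commutative Henselian. Every commutative pair is perfect because its commutator filtration collapses to $\{0\}$ from index $2$ onwards, and by the remark following the definition of Henselian pairs, left Henselian and Henselian coincide when the residue ring is commutative; thus $(C,L)$ is a perfect left Henselian pair. The universal property of the left Henselization then provides a unique $\tilde\psi:(A^{lh},I^{lh})\to(C,L)$ with $\phi=\tilde\psi\circ\phi^{lh}$, and commutativity of $C$ forces $\tilde\psi([A^{lh},A^{lh}])=0$. Consequently $\tilde\psi$ descends to a unique morphism $\psi:(A^{lh}/J,I^{lh}/J)\to(C,L)$ satisfying $\phi=\psi\circ q\circ\phi^{lh}$. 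Because the commutative Henselization from \cite{lafon1963anneaux} is characterized up to unique isomorphism by the same universal property in $\mathcal{P}_c$, the pair $(A^{lh}/J,I^{lh}/J)$ must be canonically isomorphic to $(A^{ch},I^{ch})$.

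The main subtlety I anticipate lies in the alternative, more direct route of verifying Hensel's lemma inside $(A^{lh}/J,I^{lh}/J)$ by hand: existence of factorizations is immediate by lifting a monic polynomial to $A^{lh}[x]$, factoring there, and reducing modulo $J$, but uniqueness cannot be deduced directly from the left Henselian property of $(A^{lh},I^{lh})$, since two different factorizations in $(A^{lh}/J)[x]$ need not admit a common monic lift of their product in $A^{lh}[x]$. Commutativity of $A^{lh}/J$ is precisely what rescues the argument, via the Bezout identity supplied by Lemma \ref{coprime-reduction} together with the classical divisibility manipulation $F_1(F_2-G_2)=(G_1-F_1)G_2$ followed by a degree comparison; this is also the conceptual reason one has to pass to the quotient by $J$.
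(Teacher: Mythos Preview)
Your argument is essentially sound and in places more explicit than the paper's, but there is one organizational slip worth flagging.

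For the containment $J\subseteq I^{lh}$ you argue via the residue ring: you check that every simple LF-extension, hence every LF-extension, hence the colimit, has residue ring canonically equal to $A/I$; since $A/I$ is commutative, all commutators of $A^{lh}$ die in $A^{lh}/I^{lh}$. This is correct and is \emph{not} what the paper does: the paper instead takes the comparison map $\phi:(A^{lh},I^{lh})\to(A^{ch},I^{ch})$ coming from the universal property of $\phi^{lh}$ and reads off $J\subseteq\ker\phi\subseteq\phi^{-1}(I^{ch})=I^{lh}$. Your route avoids invoking the existence of $(A^{ch},I^{ch})$ at this stage, while the paper's route is shorter once that existence is granted.

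The slip is in your second paragraph. Having shown that every morphism from $(A,I)$ to a commutative Henselian pair factors uniquely through $q\circ\phi^{lh}$ does \emph{not} by itself identify $(A^{lh}/J,I^{lh}/J)$ with $(A^{ch},I^{ch})$: uniqueness of representing objects only applies once you know $(A^{lh}/J,I^{lh}/J)$ lies in the reflective subcategory, i.e.\ is itself commutative Henselian. Concretely, you get the forward map $\psi$ to $(A^{ch},I^{ch})$, but to produce an inverse via the universal property of $\phi^{ch}$ you must first know the target is Henselian. Your third paragraph supplies exactly this missing ingredient (Jacobson is inherited; existence of liftings by pushing down from $A^{lh}$; uniqueness either by your Bezout-plus-degree argument or, more directly, by Proposition~\ref{uniquenesslifting} since a commutative Jacobson pair is automatically perfect and hence a UFP). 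So the mathematics is all there, but what you call an ``alternative, more direct route'' is in fact a \emph{necessary} step to close the argument of paragraph two, not an optional variant. The paper's proof, incidentally, is equally terse on this point: it simply asserts that ``using the universal properties of $\phi^{lh}$ and $\phi^{ch}$, one can verify that $\psi$ is an isomorphism,'' which tacitly presupposes the same verification.
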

\begin{proof}
Let $(A^{ch},I^{ch})$  be the commutative Henselization of $(A,I)$ and 
$$\phi^{ch}:(A,I)\to (A^{ch},I^{ch}),$$ be the corresponding morphism of pairs. Since $(A^{ch},I^{ch})$ is left Henselian, there exists a unique morphism 
$$\phi:(A^{lh},I^{lh})\to (A^{ch},I^{ch}),$$
of pairs such that $\phi^{ch}=\phi\circ\phi^{lh}$.  The fact that $(A^{ch},I^{ch})$ is commutative implies that 
the ideal $J$ generated by $[A^{lh},A^{lh}]$ is contained in the ideal
$$\ker(\phi)=\phi^{-1}(0)\subset \phi^{-1}(I^{ch})=I^{lh}.$$
Moreover,  there exists a unique morphism 
$$\psi:(\frac{A}{J}^{lh},\frac{I}{J}^{lh})\to (A^{ch},I^{ch})$$ such that $\phi=\psi\circ q$. Using the universal properties of $\phi^{lh}$ and $\phi^{ch}$, one can verify that $\psi$ is an isomorphism and the morphism 
$$q\circ \phi^{lh}:(A,I)\to (\frac{A}{J}^{lh},\frac{I}{J}^{lh})$$
is the commutative Henselization of $(A,I)$. 
\end{proof}

\end{subsection}
\begin{subsection}{Henselizations of local rings}\label{localringHensel}
We conclude this article with a discussion of Henselizations of local rings. A ring $A$ is called \textit{local} if the set of all nonunits in $A$ form an ideal. Every local ring $A$ has a unique maximal ideal $I$. A pair $(A,I)$ is called a local pair if $A$ is a local ring and $I$ is its maximal ideal. We note that any local pair is a Jacobson pair. 

\begin{proposition}\label{NHensellocal}
The left (right) Henselization of any perfect local pair is a local pair.   
\end{proposition}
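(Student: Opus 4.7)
The plan is to trace the construction of $(A^{lh}, I^{lh})$ from Theorem \ref{NHenselCom}, verifying the local property at each stage and then passing to the limit. The key input is that when $(A, I)$ is a local pair, the quotient $A/I$ is a division ring, hence a local ring with trivial maximal ideal, so Lemma \ref{localmapsLocalrings} can be invoked.

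For a simple LF-extension $\Phi_{\langle F; f_1, f_2 \rangle}$, the construction in Proposition \ref{factorizationN} yields a ring $R$ together with a local homomorphism $\gamma: R \to A/I$, and the ideal $J = \ker(\gamma)$. Since $A/I$ is local, Lemma \ref{localmapsLocalrings} gives that $R$ is local with maximal ideal $J$. The simple LF-extension then takes the perfect quotient $(R/\bigcap_n J^{(n)}, J/\bigcap_n J^{(n)})$; because $\bigcap_n J^{(n)} \subset J = \mathrm{rad}(R)$, elements of the form $1+k$ with $k \in \bigcap_n J^{(n)}$ are units in $R$, so any one-sided inverse in the quotient lifts (after correction) to an actual two-sided inverse in $R$. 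This identifies the nonunits of the quotient with $J/\bigcap_n J^{(n)}$, making the pair local. By induction on the length of a chain of simple LF-extensions, every LF-extension of $(A, I)$ is a local pair.

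Next I would pass to the direct limit defining $(A^{lh}, I^{lh})$. An arbitrary element of $A^{lh}$ is the image of some $a \in A_\alpha$ for some LF-extension $(A_\alpha, I_\alpha)$ of $(A, I)$; since $(A_\alpha, I_\alpha)$ is local by the previous step, either $a \in I_\alpha$, placing its image in $I^{lh}$, or $a$ is a unit in $A_\alpha$, making its image a unit in $A^{lh}$. Thus every nonunit of $A^{lh}$ lies in $I^{lh}$. For the reverse inclusion, we use that $(A^{lh}, I^{lh})$ is left Henselian by Theorem \ref{NHenselCom}, hence a Jacobson pair, so $I^{lh} \subset \mathrm{rad}(A^{lh})$ and its elements are nonunits. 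Therefore the set of nonunits of $A^{lh}$ equals $I^{lh}$, which is an ideal, so $A^{lh}$ is a local ring with maximal ideal $I^{lh}$. The argument for the right Henselization is verbatim the same.

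The main technical obstacle is verifying that the perfect quotient step in the construction of a simple LF-extension preserves the local property; this hinges on the inclusion $\bigcap_n J^{(n)} \subset J = \mathrm{rad}(R)$, which ensures units pass back and forth between $R$ and its perfect quotient. Beyond that, the proof is a direct application of Lemma \ref{localmapsLocalrings}, Proposition \ref{localization}, and the formal behavior of direct limits and Jacobson pairs.
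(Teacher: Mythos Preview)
Your proof is correct and follows essentially the same approach as the paper: show each simple LF-extension is local via Lemma~\ref{localmapsLocalrings} applied to the local map $\gamma:R\to A/I$, then pass to the perfect quotient and finally to the direct limit. You simply spell out in more detail two steps the paper leaves implicit (that quotienting a local ring by an ideal inside its radical preserves locality, and that a directed limit of local pairs is local).
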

\begin{proof}
Since the direct limit of local pairs is a local pair, it is enough to show that any simple LF-extension of a perfect local pair is a local pair. Let 
$$\Phi_{\langle F;f_1,f_2\rangle}:(A,I)\to  (A\langle F;f_1,f_2\rangle,I\langle F;f_1,f_2\rangle)$$
be a simple LF-extension where $(A,I)$ is a local ring.  Referring to the notations used in Proposition \ref{factorizationN}, one can see that the ring homomorphism $\gamma:R\to  A/I$ is a local homomorphism. Since $A/I$ is a local ring, Lemma \ref{localmapsLocalrings}
implies that $R$ is a local ring whose maximal ideal is $J=\gamma^{-1}(0)$. Using the relation
$$(A\langle F;f_1,f_2\rangle,I\langle F;f_1,f_2\rangle)=\digamma{(R,J)},$$
we conclude that $(A\langle F;f_1,f_2\rangle,I\langle F;f_1,f_2\rangle)$ is a local pair, and we are done. 
\end{proof}

\end{subsection}

\end{section}

\bibliographystyle{plain}
\bibliography{NHLbiblan}

\begin{thebibliography}{1}

\bibitem{aryapoor2009non}
Masood Aryapoor.
\newblock Non-commutative henselian rings.
\newblock {\em Journal of Algebra}, 322(6):2191--2198, 2009.

\bibitem{aryapoor2010f}
Masood Aryapoor.
\newblock F-schemes.
\newblock {\em arXiv preprint arXiv:1001.1862}, 2010.

\bibitem{azumaya1951maximally}
Gor{\^o} Azumaya.
\newblock On maximally central algebras.
\newblock {\em Nagoya Mathematical Journal}, 2:119--150, 1951.

\bibitem{cohn_1995}
Paul~Moritz. Cohn.
\newblock Skew fields: theory of general division rings, 1995.

\bibitem{greco1969henselization}
Silvio Greco.
\newblock Henselization of a ring with respect to an ideal.
\newblock {\em Transactions of the American Mathematical Society}, 144:43--65,
  1969.

\bibitem{lafon1963anneaux}
J~Lafon.
\newblock Anneaux hens{\'e}liens.
\newblock {\em Bulletin de la Soci{\'e}t{\'e} Math{\'e}matique de France},
  91:77--107, 1963.

\bibitem{lam2013first}
Tsit-Yuen Lam.
\newblock {\em A first course in noncommutative rings}, volume 131.
\newblock Springer Science \& Business Media, 2013.

\bibitem{nagata1962local}
Masayoshi Nagata.
\newblock Local rings.
\newblock {\em Interscience Tracts in Pure and Appl. Math.}, 1962.

\end{thebibliography}

 \end{document}